\documentclass[english,12pt,a4paper]{article}
\usepackage[T1]{fontenc}
\usepackage{babel}
\usepackage{amsmath,amsthm,amssymb}
\usepackage{mathtools}
\usepackage{graphicx}
\usepackage{ragged2e}
\usepackage{bigints}
\usepackage{units}
\usepackage{hyperref}
\newtheorem{thm}{Theorem}[section]

\newcommand{\norm}[1]{\left\lVert#1\right\rVert}
\begin{document}

		\title{Controllability of a Class of Nonlinear Networked Systems}
	
	\author{Aleena Thomas, Abhijith Ajayakumar, and Raju K.George
		\thanks{The first and second authors are funded by the University Grants Commission, India and Council for Scientific and Industrial Research, India,  respectively.}
		\thanks{The authors are with the Department of Mathematics, Indian Institute of Space Science and Technology, Thiruvananthapuram, Kerala, India.\url{aleenathomas.22@res.iist.ac.in, abhijithajayakumar.19@res.iist.ac.in, george@iist.ac.in}}}

	\maketitle

		\begin{abstract}
			
			Various controllability conditions have been obtained by researchers for heterogeneous networked systems with linear dynamics. However, the literature for nonlinear, heterogeneous networked systems is comparatively less. In this paper we analyse the controllabiity aspect of a nonlinearly perturbed linear networked system. The basic assumption is that the linear system is controllable and the nonlinear perturbation functions satisfy \textit{H\"older continuity} condition and in particular \textit{Lipschitz condition}. The \textit{Boyd-Wong} fixed point theorem is employed to prove controllability of the nonlinear system. The result is illustrated with numerical examples.
			
		\end{abstract}
		
		\section{Problem Formulation}
		\label{sec3}
		In this work, we consider a networked system with $N$ nodes, each having distinct dimension and following heterogeneous dynamics. Let the dynamics of the $i-$th node be denoted by $(A_i,B_i,C_i)$, where $A_i\in \mathbb{R}^{n_i\times n_i}, B_i\in \mathbb{R}^{p_i\times n_i} $ and $C_i\in \mathbb{R}^{n_i\times m}$ are the state, input and output matrices of node $i$, respectively.
		
		These $N$ nodes are connected in a network topology $L=[\beta_{ij}]_{i,j=1}^{N}$, where $$\beta_{ij}\begin{dcases}
			\neq 0&-\, \text{if there is an edge from node $j$ to node $i$}\\
			=0&-\, \text{otherwise}
		\end{dcases}$$ The matrix of external control inputs is given by $\Delta=[\delta_i]_{i=1}^{N}$, where $$\delta_i=\begin{dcases}
			1& -\, \text{if node $i$ is under external control input}\\
			0& -\, \text{otherwise}
		\end{dcases}$$ Inner-coupling matrices, signifying the inner-interactions among the nodes are denoted by $H_i\in \mathbb{R}^{n_i\times m} $.\\
		
		\par  Consider the case when a nonlinear perturbation $f_i(t,x_i(t))$ is applied to each node, where $f_i$ is a nonlinear function defined from $\mathbb{R}\times \mathbb{R}^{n_i}$ to $\mathbb{R}^{n_i}$. Then, under the connection according to $L$ and with the effect of the nonlinear perturbation, the dynamics of the $i$-th node becomes
		\begin{equation}\label{sys-i}
			\dot{x_i}=A_ix_i+\beta_{ij}H_iC_jx_j+\delta_iB_iu_i+f_i(t,x_i(t))
		\end{equation}
		The compact form representation of the networked system is given by
		\begin{equation}\label{mainsys}
			\dot{X}=\mathcal{A} X+\mathcal{B} U+F(t,X(t))
		\end{equation}
		where \begin{align*}
			\mathcal{A}&=A+(\beta_{ij}H_iC_j)_{i,j=1}^{N}; A=blockdiag\{A_1,\cdots,A_N\}\\
			\mathcal{B}&=blockdiag\{B_1,\cdots,B_N\}\\
			F(t,X(t))&=[f_1(t,x_1(t)),\cdots,f_N(t,x_N(t))]^T\\
			L&=[\beta_{ij}]_{i,j=1}^{N}\\
			\Delta&=diag\{\delta_1,\cdots,\delta_N\}
		\end{align*}
		The problem under consideration is the study of controllability of the network (\ref{mainsys}).
		
		\section{Main Result}
		\label{sec4}
		In this section, we will establish a sufficient condition for the controllability of the nonlinear networked system (\ref{mainsys}) based on the fixed point theorem by \textit{Boyd \& Wong}.\\
		
		By Theorem, the \textbf{solution map} of the networked system (\ref{mainsys}) is 
		\begin{align}
			\mathcal{K}&:\mathcal{L}^2[t_0,t_1]\to \mathcal{L}^2[t_0,t_1]\nonumber\\
			(\mathcal{K}x)(t)&\coloneqq \Phi(t,t_0)x_0+\int_{t_0}^{t}\Phi(t,\tau)\bigl[\Psi\tilde{u}(x,\tau)+F(\tau,x(\tau))\bigr]d\tau\label{fp}
		\end{align}
		where \begin{equation}
			\tilde{u}(x,t)=\Psi^T\Phi^T(t_1,t)\mathbb{W}^{-1}\biggl[x_1-\Phi(t_1,t_0)x_0-\int_{t_0}^{t_1}\Phi(t_1,\tau)F(\tau,x(\tau))d\tau\biggr]
		\end{equation} 
		Any fixed point of $\mathcal{K}$ will be a solution to the control system \eqref{mainsys} satifying the initial condition $x(t_0)=x_0$ and the desired final condition $x(t_1)=x_1$. Hence, inorder to show that the system is controllable, it is enough to show that the map $\mathcal{K}$ has a fixed point. Let
		\begin{align*}
			||\Phi(t_1,t_0)||&\leq \alpha_0\\
			||\Psi\Psi^T||&\leq \beta\\
			||\Phi^T(t_1,t)||&\leq \gamma\\
			||\mathbb{W}^{-1}||&\leq \delta
		\end{align*}
		
		\begin{thm}\label{thm1}
			Suppose that the linear part $(\mathcal{A}, \mathcal{B})$ of the nonlinear networked system (\ref{mainsys}) is controllable. Further, let the nonlinear perturbation $F(t,X(t))$ be H\"older continuous with respect to $X$ with exponent $\rho$ and multiplicative constant $\alpha$. If $M=\alpha\alpha_0^2\beta\gamma\delta(t_1-t_0)+\alpha\alpha_0$ and $Mt^\rho<t, \forall t>0$, then the nonlinear networked system is controllable.
		\end{thm}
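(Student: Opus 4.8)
The plan is to prove controllability by exhibiting a fixed point of the solution map $\mathcal{K}$ defined in \eqref{fp}, and the natural tool is precisely the Boyd--Wong fixed point theorem, whose hypothesis on the comparison function ($\psi(t)<t$ for all $t>0$) is exactly the stated condition $Mt^\rho<t$. Recall that Boyd--Wong guarantees a unique fixed point of a self-map $\mathcal{K}$ on a complete metric space provided $\norm{\mathcal{K}x-\mathcal{K}y}\le \psi(\norm{x-y})$ for some upper semicontinuous $\psi$ with $\psi(t)<t$ on $(0,\infty)$. Since a fixed point of $\mathcal{K}$ is by construction a trajectory of \eqref{mainsys} joining $x_0$ to $x_1$, producing such a fixed point is exactly what is needed. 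The target is therefore to establish an estimate of the form $\norm{\mathcal{K}x-\mathcal{K}y}\le M\norm{x-y}^\rho$ with the stated constant $M$, and then to take $\psi(t)=Mt^\rho$.

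First I would confirm that $\mathcal{K}$ is a self-map of the complete space $\mathcal{L}^2[t_0,t_1]$ (this is essentially the content of the ``solution map'' theorem invoked above), so that Boyd--Wong is applicable. Next, for two inputs $x,y$ I would subtract the two expressions \eqref{fp}: the term $\Phi(t,t_0)x_0$ cancels, leaving an integral of $\Phi(t,\tau)$ acting on the difference of the steering controls $\Psi\bigl(\tilde{u}(x,\tau)-\tilde{u}(y,\tau)\bigr)$ together with the difference of the perturbations $F(\tau,x(\tau))-F(\tau,y(\tau))$. The control difference is the delicate piece: from the definition of $\tilde{u}$, the terms $x_1$ and $\Phi(t_1,t_0)x_0$ cancel, so $\tilde{u}(x,\tau)-\tilde{u}(y,\tau)$ reduces to $\Psi^T\Phi^T(t_1,\tau)\mathbb{W}^{-1}$ applied to $-\int_{t_0}^{t_1}\Phi(t_1,s)\bigl(F(s,x(s))-F(s,y(s))\bigr)\,ds$.

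I would then bound each factor by the constants $\alpha_0,\beta,\gamma,\delta$ and apply the H\"older-continuity hypothesis $\norm{F(\cdot,x)-F(\cdot,y)}\le \alpha\norm{x-y}^\rho$ inside every integral. Collecting the two contributions, the control term produces the factor $\alpha\alpha_0^2\beta\gamma\delta(t_1-t_0)$ while the direct perturbation term produces $\alpha\alpha_0$, and these sum to exactly $M$; this is the computation that explains the precise form of the constant. The final step is to set $\psi(t)=Mt^\rho$, note that it is continuous (hence upper semicontinuous from the right) and satisfies $\psi(t)<t$ for all $t>0$ by hypothesis, and invoke Boyd--Wong to obtain the fixed point, i.e.\ controllability of \eqref{mainsys}.

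The main obstacle I anticipate is the passage from the pointwise-in-$t$ integral estimates to a clean inequality $\norm{\mathcal{K}x-\mathcal{K}y}\le M\norm{x-y}^\rho$ in the norm of the chosen function space: the H\"older exponent $\rho$ sits inside the time integrals, so moving it outside to produce $\norm{x-y}^\rho$ rather than an integral of $\norm{x-y}^\rho$ requires care with H\"older's (or Jensen's) inequality, and it is here that the $(t_1-t_0)$ factors and the exact constant must be tracked. A secondary point worth flagging is that for $\rho\neq 1$ the map $\mathcal{K}$ is genuinely non-Lipschitz, so the classical Banach contraction principle does not apply and the Boyd--Wong generalization is essential; one should also confirm that the assumed inequality $Mt^\rho<t$ is imposed on the full range $(0,\infty)$ of possible distances, which is precisely what makes $\psi$ an admissible comparison function.
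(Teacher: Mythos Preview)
Your proposal is correct and follows essentially the same route as the paper: subtract the two instances of \eqref{fp}, cancel the data terms in $\tilde{u}$, bound each factor by $\alpha_0,\beta,\gamma,\delta$ and invoke H\"older continuity to reach $\norm{\mathcal{K}X-\mathcal{K}Y}\le M\norm{X-Y}^\rho$, then set $\psi(t)=Mt^\rho$ and apply Boyd--Wong. Your flagged obstacle about passing from pointwise-in-$t$ estimates to the function-space norm is well observed; the paper's own proof performs exactly this collapse without further justification, so you are not missing any idea present there.
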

		
		\begin{proof}
			Using \eqref{fp},
			
			$$\norm{(\mathcal{K}X)(t)-(\mathcal{K}Y)(t)}=\norm{\int_{t_0}^{t}\Phi(t,\tau)\big[\Psi(\tilde{u}(X,\tau)-\tilde{u}(Y,\tau))+(F(\tau,X(\tau))-F(\tau,Y(\tau)))\big]d\tau}$$
			\begin{align*}
				&\leq \norm{\int_{t_0}^{t}\Phi(t,\tau)\big[\Psi\big(\tilde{u}(X,\tau)-\tilde{u}(Y,\tau)\big)\big]\bigg|\bigg|+\bigg|\bigg|\int_{t_0}^{t}\Phi(t,\tau)\big[F(\tau,X(\tau))-F(\tau,Y(\tau))\big]d\tau}\\
				&\leq \norm{\bigintsss_{t_0}^{t}\Phi(t,\tau)\Psi\Psi^T\Phi^T(t_1,\tau)W^{-1}\int_{t_0}^{t_1}\Phi(t_1,s)[F(s,X(s))-F(s,Y(s))]ds d\tau}\\
				&\qquad\qquad\qquad+\norm{\int_{t_0}^{t}\Phi(t,\tau)[F(\tau,X(\tau))-F(\tau,Y(\tau))]d\tau}\\
				&\leq \alpha\alpha_0^2\beta\gamma\delta\big[	\parallel X-Y\parallel^\rho(t_1-t_0)(t-t_0)\big]+\alpha\alpha_0	\parallel X-Y	\parallel^\rho(t-t_0)
			\end{align*}
			That is, \begin{equation}
				\parallel \mathcal{K}X-\mathcal{K}Y	\parallel\leq \big[\alpha\alpha_0^2\beta\gamma\delta(t_1-t_0)+\alpha\alpha_0\big]\norm{X-Y}^\rho
			\end{equation}
			\begin{equation}\label{ineq}
				\parallel\mathcal{K}X-\mathcal{K}Y	\parallel\leq M	\parallel X-Y	\parallel ^\rho
			\end{equation}
			Define \begin{align*}
				\chi&:\mathbb{R}^+\to \mathbb{R}^+\\
				\chi(t)&=Mt^\rho
			\end{align*}
			Then from \eqref{ineq} , $$d(\mathcal{K}X,\mathcal{K}Y)\leq \chi(d(X,Y))$$ where $d$ is the metric induced by the norm.\\
			Also, note that the function $\chi$ is upper-semi continuous. Further, by the choice of $M$ and $\rho$, $\mathcal{K}$ has a unique fixed point, byBoyd-Wong Fixed Point theorem. Hence, system (\ref{mainsys}) is controllable.
			
		\end{proof}
		
		\section{Conclusion}
		\label{sec5}
		In this paper we have analysed the controllability of a networked system with nonlinearities added to each node. Distinct node dimensions and inner-coupling matrices make the system more generalized. A sufficient condition for controllability of the networked system is obtained by the \textit{Boyd \& Wong} fixed point theorem. The examples provide insights into the intricacies of the condition. The result is particularly useful in the case of sub-linear and Lipschitzian perturbations applied to the nodes. We expect to have more sufficient conditions for controllability of nonlinear networked systems. Study of networked systems where the connections are nonlinear is also a possible research direction.
		
		\section*{Acknowledgment}
		The first and second authors thank the University Grants Commission, India and  Indian Institue of Space Science and Technology, India for the financial support, respectively. We thank the Department of Mathematics, Indian Institue of Space Science and Technology, Thiruvananthapuram, Kerala, India for providing all the necessary facilities to pursue this research work.

\end{document}